\newtheorem{definition}{Definition}
\newtheorem{proposition}{Proposition}
\newtheorem{remark}{Remark}
\newcommand{\RR}{\mathbb{R}}
\newcommand{\N}{\mathbb{N}}
\begin{document}

\title{On the Representation and Construction of Equitable Social Welfare
Orders\footnote{We sincerely thank the referee and the associate editor of this journal for several useful comments and constructive suggestions.}}
\author{Ram Sewak Dubey\thanks{%
Department of Economics, Feliciano School of Business, Montclair State
University, Montclair, NJ 07043; E-mail: dubeyr@montclair.edu} \and %
Giorgio Laguzzi\thanks{%
University of Freiburg in the Mathematical Logic Group at Eckerstr. 1, 79104
Freiburg im Breisgau, Germany; Email: giorgio.laguzzi@libero.it} \and %
Francesco Ruscitti\thanks{%
Department of Economics and Social Sciences, John Cabot University, Via
della Lungara 233, 00165 Rome, Italy. Email: fruscitti@johncabot.edu}}
\date{\today}
\maketitle

\begin{abstract}
This paper examines the representation and explicit description of social
welfare orders on infinite utility streams. It is assumed that the social welfare orders under
investigation satisfy upper asymptotic Pareto and anonymity axioms. We prove
that there exists no real-valued representation of such social welfare
orders. 
In addition, we establish that the existence of a social welfare order
satisfying the anonymity and upper asymptotic Pareto axioms implies the
existence of a non-Ramsey set, which is a non-constructive object. Thus, we
conclude that the social welfare orders under study do not admit explicit
description.

\noindent \emph{Keywords:} \texttt{Anonymity,}\; \texttt{Non-Ramsey set,}\; 
\texttt{Social Welfare Function,}\; \texttt{Social Welfare Order,}\; \texttt{%
Upper Asymptotic Pareto.}

\noindent \emph{Journal of Economic Literature} Classification Numbers: 
\texttt{C65,}, \texttt{D63,}\; \texttt{D71.}
\end{abstract}

\setcounter{MaxMatrixCols}{10}

\newpage

\section{Introduction}

\label{sec:1} This paper deals with efficiency and intergenerational equity
in the setting of policies that affect present and future generations.
Relevant questions are: how should a social planner weigh the welfare of the
present generation against the well-being of future generations? Is there a
conflict (and in what sense) between intergenerational equity and efficiency
in the evaluation of infinite utility streams? 
This subject has received wide
attention in the economics, philosophy and political science literature in
recent years. In this paper we investigate preference relations, on the
space of infinite utility streams, that are complete, transitive, invariant
to finite permutations, 
and respect some version
of the Pareto ordering: equitable preferences, for short. We stick to the
standard framework which concerns the problem of defining a social welfare
order on the set $X$ of infinite utility streams, where $X$ is of the form $%
X=Y^{\mathbb{N}}$, $Y$ denotes a non-empty subset of real numbers, and $%
\mathbb{N}$ is the set of natural numbers. There is a vast body of
literature on the subject matter. In what follows we will briefly overview
it in order to highlight and put our own contribution in context.

In a pioneering paper, \citet{ramsey1928} observed that discounting one
generation's utility relative to another's is 
\enquote{ethically
indefensible}, and something that 
\enquote{arises merely
from the weakness of the imagination}. Following in Ramsey's footsteps, %
\citet{diamond1965} introduced the concept of \emph{anonymity} (as an axiom
imposed on preferences over infinite utility streams) to formalize the
principle of equitable preferences (\enquote{equal
treatment} of present and future generations). This axiom requires that two
infinite utility streams be indifferent if one is obtained from the other by
interchanging the utility level of any two generations. There is also broad
consensus among scholars on another desirable attribute that preferences
should possess, namely the Pareto criteria. In its strongest form,
the Pareto principle asserts that one utility stream must be deemed strictly
better than another if at least one generation is better off and no
generation is worse off. Therefore, a question that naturally arises is
whether one can aggregate infinite utility streams with a social welfare
function,\footnote{%
A real-valued function representing a given social welfare order is referred
to as a social welfare function.} and consistently evaluate them while respecting anonymity and some form of the Pareto
axiom. This question was first approached formally by \citet{diamond1965}
who showed that, if the possible range of utilities in each period is the
closed interval $[0,1]$, a social welfare order that displays anonymity and
the strong Pareto ordering cannot be continuous in the topology induced by
the supremum norm. Hence, there does not exist any continuous (in the
topology induced by the sup norm) social welfare function satisfying the
anonymity and strong Pareto axioms. \citet{basu2003} refined Diamond's
result by showing that the non-representability result still holds when
continuity is dispensed with, and even for subsets $Y$ of the real numbers
containing only two elements. The bottom line is that when $Y$ contains at
least two elements, there exists no representable social welfare
order satisfying the anonymity and strong Pareto axioms. Hence, one cannot
exploit canonical constrained-maximization techniques to figure out an
optimal policy. One potential way out consists in weakening the strong
Pareto condition while still demanding that the social welfare order be
representable. However, \citet{crespo2009} established that if $Y$ contains
at least two elements, there is no social welfare function satisfying
anonymity and the infinite Pareto axiom.\footnote{%
According to the infinite Pareto axiom, one utility stream is strictly
better than another if infinitely many generations are better off and no
generation is worse off. So, infinite Pareto is weaker than strong Pareto.}
On the other hand, \citet{basu2007a} provided an example of a social welfare
function that satisfies anonymity and the weak Pareto principle.\footnote{%
The weak Pareto principle states that an infinite utility stream, say $x$, is
preferred to another, say $y$, if every generation is better off in $x$ than
in $y$. So, infinite Pareto is stronger than weak Pareto.} 
\citet{petri2019} examined a version of the Pareto axiom, namely lower
asymptotic Pareto, which is weaker than infinite Pareto and stronger than
weak Pareto. In a nutshell, given any pair $x$ and $y$ of infinite utility
streams, if $x$ dominates $y$ and the lower asymptotic density of the subset
of natural numbers such that $x_{n}>y_{n}$ is positive, the lower asymptotic
Pareto ordering requires $x$ to be preferred to $y$. 
Petri exhibited an explicit formula for a social welfare function satisfying lower asymptotic Pareto and anonymity under the assumption that Y is finite.
This result leaves us wondering whether there exists a social welfare function if one considers
a Pareto ordering that is weaker than infinite Pareto but stronger than
lower asymptotic Pareto. 
We address this issue by focusing on a version
of the Pareto ordering that we term upper asymptotic Pareto as it hinges on
the upper asymptotic density of the subset of natural numbers over which a
welfare improvement occurs. It is easy to see that upper asymptotic Pareto is
weaker than infinite Pareto. Moreover, as will become clear later, upper
asymptotic Pareto is stronger than lower asymptotic Pareto. 
Therefore, a question arises: is the existence of a social welfare function still guaranteed if $Y$ is any
non-trivial domain and one postulates upper asymptotic Pareto together with
anonymity? Proposition \ref{P1} below provides a negative answer to the
preceding question.\footnote{As a matter of fact, in the proof of Proposition \ref{P1} we use the concept of weak upper asymptotic Pareto (see definitions \ref{D1} through \ref{D4} below) which is weaker than upper asymptotic Pareto. Arguably, this makes our impossibility result more compelling.}

\citet{petri2019} found a social welfare function satisfying lower
asymptotic Pareto and anonymity on a domain $Y$ containing finitely many
elements, but we know from Proposition \ref{P1} below that there is no
numerical representation of a social welfare order satisfying weak upper
asymptotic Pareto and anonymity. Although a real-valued representation of an
underlying social welfare order can be
very useful, yet pairwise ranking of utility streams would suffice for the
purpose of policy-making as long as the binary relation at hand exists and
can be operationalized. Therefore, we wish to know if it is possible to describe explicitly (for the purpose of economic policy) a social
welfare order satisfying weak upper asymptotic Pareto and anonymity. To
provide some background on this line of inquiry, before we preview our own result, recall that \citet{svensson1980} established the existence of
a social welfare order that satisfies the anonymity and strong Pareto
axioms, assuming the set $Y$ of possible range of utilities to be the closed
interval $\left[ 0,1\right] $. However, his possibility result relies on
Szpilrajn's Lemma whose proof depends on the axiom of choice. Consequently,
this social welfare order cannot be used by policy makers for social
decision-making. In the wake of Svensson's result, \citet{fleurbaey2003}
conjectured that 
\enquote{there exists no
explicit description (that is, avoiding the axiom of choice or similar
contrivances) of an ordering which satisfies the Anonymity and Weak Pareto
axioms}. As shown by \citet{lauwers2010} and \citet{zame2007}, it turns out
that the axiom of choice is unavoidable for the existence of a
social welfare order satisfying the anonymity and Pareto axioms. The proof
of their result relies on the existence of non-Ramsey sets and
non-measurable sets, respectively. 
Similar to the above findings, in Proposition \ref{P2} of the present paper we show that the existence of a social welfare order satisfying anonymity and weak upper asymptotic Pareto (we know such order does exist, in view of  \citet{svensson1980}), on a domain $Y$ containing at least two elements, entails the existence of a non-Ramsey set.

In order to highlight the scope of our results within the existing literature, it is worth considering the following table. 
It summarizes some results on the representation and constructive nature of anonymous social welfare orders satisfying various forms of the Pareto axiom.%
\footnote{In the table below $|Y|$ denotes the cardinality of the set $Y$.} 
We defer further discussion on the question mark appearing in Table 1 to the concluding remarks.

\begin{table}[ht]
\caption{}
\centering 
\begin{tabular}{l l l l}
\hline\hline 
Pareto axiom&$|Y|$& Representation & Constructive nature \\ [0.25ex] 
\hline
Strong &$\geq 2$&No (\citet{basu2003}) & No (\citet{lauwers2010}) \\
&&&\quad\quad\citet{zame2007}\\
\hline
Infinite &$\geq 2$&No (\citet{crespo2009}) & No (\citet{lauwers2010}) \\ 
\hline
Upper Asymptotic &$\geq 2$& No (Proposition \ref{P1})  & No (Proposition \ref{P2})  \\ 
\hline
Lower Asymptotic &Finite& Yes  (\citet{petri2019}) & Yes  (\citet{petri2019})\\ 
\hline
Lower Asymptotic &Infinite& No (\citet{petri2019}) & \qquad ?\\ 
\hline\hline
\end{tabular}
\end{table}

The remainder of the paper is organized as follows. 
In section \ref{sec:2} we introduce the basic notation which will be used throughout the paper and gather all the definitions.
In section \ref{sec:3} we state and prove our main
results (Propositions \ref{P1} and \ref{P2}). Section \ref{sec:4} concludes.

\section{Preliminaries}

\label{sec:2}

Let $\mathbb{R}$, $\mathbb{Q}$, and $\mathbb{N}$ be the set of real numbers,
rational numbers, and natural numbers, respectively. For $y$, $z\,\in \,%
\mathbb{R}^{\mathbb{N}}$, we write $y\geq z$ if $y_{n}\geq z_{n}$, for all $%
n\in \mathbb{N}$; $y>z$ if $y\geq z$ and $y\neq z$; and $y\gg z$ if $%
y_{n}>z_{n}$ for all $n\in \mathbb{N}$.

\subsection{Social Welfare Orders}

\label{s2.2}

Let $Y\subset \mathbb{R}$ be the set of all possible utilities that any
generation can achieve. Then, $X\equiv Y^{\mathbb{N}}$ is the set of all
feasible utility streams. We denote an element of $X$ by $x$, or, alternately
by $\langle x_{n}\rangle $, depending on the context. 
If $\langle x_{n}\rangle \in X$, then $\langle x_{n}\rangle = \left(x_{1}, x_{2},\cdots\right)$, where $x_{n}\in Y$ represents the amount of utility earned by the $n^{\text{th}}$ generation.

A binary relation on $X$ is denoted by $\succsim $. Its symmetric and
asymmetric parts, denoted by $\sim$ and $\succ$, respectively, are defined
in the usual way. A social welfare order (SWO henceforth) is by definition a
complete and transitive binary relation. Given a SWO $\succsim $ on $X$, one
says that $\succsim $ can be represented by a real-valued function,
called a social welfare function (SWF henceforth), if there is a mapping $%
W:X\rightarrow \mathbb{R}$ such that for all $x$, $y\in X$, $x\succsim y$ if
and only if $W(x)\geq W(y)$.

It is useful to recall the definitions of lower and upper asymptotic density of a set $S\subset 
\mathbb{N}$. As usual, we will let $|\cdot |$ denote the cardinality of a given
finite set. The lower asymptotic density of $S$ is defined as follows: 
\begin{equation*}
\underline{d}(S)=\underset{n\rightarrow \infty }{\liminf }\;\frac{|S\cap
\{1,2,\cdots ,n\}|}{n}.
\end{equation*}%
Similarly, the upper asymptotic density of $S$ is defined as follows: 
\begin{equation*}
\overline{d}(S)=\underset{n\rightarrow \infty }{\limsup }\;\frac{|S\cap
\{1,2,\cdots ,n\}|}{n}.
\end{equation*}

\subsection{Equity and Efficiency Axioms}

\label{s2.3}

We will be dealing with the following equity and efficiency axioms that we
may want the SWO to satisfy.

\begin{definition}
\label{D1} \emph{Anonymity (AN henceforth): If $x, y\in X$, and there exist $i, j\in 
\mathbb{N}$ such that $y_{j} = x_{i}$ and $x_{j} = y_{i}$, while $y_{k} =
x_{k}$ for all $k\in \mathbb{N}\setminus \{i, j\}$, then $x\sim y$.}
\end{definition}

\begin{definition}
\label{D2} \emph{Upper Asymptotic Pareto (UAP henceforth): Given $x, y\in X$, if $%
x\geq y$ and $x_{i}>y_{i}$ for all $i$}$\in $\emph{$S\subset \mathbb{N}$
with $\overline{d}(S)>0$, then $x\succ y$.}
\end{definition}

\begin{definition}
\label{D3} \emph{Weak Upper Asymptotic Pareto (WUAP henceforth): Given $x, y\in X$,
if $x\geq y$ and $x_{i}>y_{i}$ for all $i$}$\in $$S\subset \mathbb{N}$ \emph{%
with $\overline{d}(S)=1$, then $x\succ y$.}
\end{definition}

\begin{definition}
\label{D4} \emph{Lower Asymptotic Pareto (LAP henceforth): Given $x, y\in X$, if $%
x\geq y$ and $x_{i}>y_{i}$ for all $i$}$\in $$S\subset \mathbb{N}$ \emph{%
with $\underline{d}(S)>0$, then $x\succ y$.}
\end{definition}

Of course, WUAP is weaker than UAP. 
Moreover, UAP is stronger than (and different from) LAP.
To see why this is the case, we refer the reader to Remark \ref{R1}.

\subsection{Non-Ramsey collection of sets}

\label{s2.4}

Let $T$ be an infinite subset of $\mathbb{N}$. We denote by $\Omega (T)$ the
collection of all infinite subsets of $T$, and we will refer to $\Omega (\mathbb{N})$ simply as $\Omega $. Thus, any infinite subset $T$ of $\mathbb{N}$ belongs to $%
\Omega$. A collection of sets $\Gamma \subset \Omega $ is called Ramsey if there exists $T\in \Omega $ such that either $\Omega (T)\subset \Gamma $
or $\Omega (T)\subset \Omega \diagdown \Gamma $. 
We can next define a collection of sets known as non-Ramsey.

\begin{definition}
\label{D5} \emph{A collection of sets $\Gamma \subset \Omega $ is said to be
non-Ramsey if for every $T\in \Omega $, the collection $\Omega (T)$
intersects both $\Gamma $ and its complement $\Omega \diagdown \Gamma $.}
\end{definition}

We refer the reader to \citet{fleurbaey2003}, \citet[Section 4]{zame2007}, %
\citet[Section 4]{lauwers2010}, \citet[Section 2.2.5]{dubey2014b}, %
\citet[Sections 2 and 3]{laguzzi2020} and \citet[Section 5]{dubey2019} for
a detailed account of the relevance of non-constructive objects (e.g., non-Ramsey
sets, non-measurable sets, non-Baire sets etc.) to economics.

\section{Results}

\label{sec:3} In this section we state and prove the main results of this
paper. Define $f:\mathbb{N}\rightarrow \mathbb{N}$ by 
\begin{equation}
f(1):=1,\;\text{and}\;f(n+1):=(n+1)f(n)=(n+1)!\;\text{for all}\;n>1.
\label{part1}
\end{equation}%
Next we use (\ref{part1}) to construct the following partition of $\mathbb{N}$ which will play an important role in
the proof of Propositions \ref{P1} and \ref{P2} below. 
\begin{equation}
I_{1}:=f(1)=\{1\},\;\text{and}\;I_{n}:=\left(f(n-1), f(n)\right] \cap 
\mathbb{N}\;\text{for}\;n\geq 2\text{.}  \label{part2}
\end{equation}%
Note that $|I_1|=1$, and 
\begin{equation*}
|I_{n}|=f(n)-f(n-1)=nf(n-1)-f(n-1)=(n-1)f(n-1),\;\text{for}\; n>1,\;\text{and%
}
\end{equation*}%
\begin{equation*}
\sum_{m=1}^{n}|I_{m}|=f(1)+\sum_{m=2}^{n}\left[ f(m)-f(m-1)\right] =f(n).
\end{equation*}%
Also, note that 
\begin{equation}
\alpha _{n}:=\frac{|I_{n}|}{\sum_{m=1}^{n}|I_{m}|}=\frac{(n-1)f(n-1)}{f(n)}=%
\frac{(n-1)f(n-1)}{nf(n-1)}=\frac{n-1}{n}=1-\frac{1}{n}.  \label{eq-density}
\end{equation}%
Observe that $\alpha _{n}\rightarrow 1$ as $n\rightarrow \infty $.%

\subsection{No social welfare function satisfies upper asymptotic Pareto and anonymity}

We first prove that there is no social welfare function satisfying UAP and AN. 
We exploit techniques used in \citet{basu2003} and \citet{crespo2009} together with the partition of the set of natural numbers introduced above (see (\ref{part2})).

\begin{proposition}\label{P1} 
There does not exist any social welfare function satisfying UAP and AN on $X=Y^{\mathbb{N}}$, with $Y=\{a, b\}$ and $a<b$.
\end{proposition}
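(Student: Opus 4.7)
The plan is to argue by contradiction. I would suppose that a SWF $W\colon X \to \mathbb{R}$ represents an SWO $\succsim$ on $X = \{a,b\}^{\mathbb{N}}$ satisfying AN and UAP; since UAP implies WUAP, $\succsim$ satisfies WUAP as well, and (per the footnote) I will use only AN and WUAP. The strategy, following the spirit of \citet{basu2003} and \citet{crespo2009}, is to produce uncountably many pairwise disjoint non-empty open intervals in $\mathbb{R}$, contradicting the countability of $\mathbb{Q}$.

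The first ingredient is a density lemma exploiting the partition $\{I_n\}$: for any infinite $T \subseteq \mathbb{N}$, $\overline{d}\bigl(\bigcup_{n \in T} I_n\bigr) = 1$. Indeed, whenever $n \in T$ the initial segment $\{1,\dots,f(n)\}$ contains $I_n$ in full, so by (\ref{eq-density}), $\bigl|\bigl(\bigcup_{m \in T} I_m\bigr) \cap \{1,\dots,f(n)\}\bigr|/f(n) \geq \alpha_n = 1 - 1/n$, which tends to $1$ as $n \to \infty$ along $T$. For each $A \subseteq \mathbb{N}$ I would introduce the \emph{block-stream} $z^{A} \in X$ defined by $z^{A}_k = b$ if $k$ belongs to some $I_n$ with $n \in A$, and $z^{A}_k = a$ otherwise. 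Combining the density lemma with WUAP then yields the key monotonicity: whenever $A \subsetneq A'$ with $A' \setminus A$ infinite, $z^{A} \prec z^{A'}$, so $W(z^{A}) < W(z^{A'})$. This is the WUAP-analog, made possible by the partition $\{I_n\}$, of the block monotonicity invoked by Basu-Mitra and Crespo et al.\ for their respective (stronger) Pareto axioms.

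To extract the contradiction I would invoke the classical ZFC fact that there is an $\omega_1$-chain $\{E_\xi\}_{\xi < \omega_1}$ in $(\mathcal{P}(\mathbb{N}), \subsetneq)$ such that $E_\eta \setminus E_\xi$ is infinite for every $\xi < \eta < \omega_1$. By the previous monotonicity, $\xi \mapsto W(z^{E_\xi})$ is then a strictly increasing map $\omega_1 \to \mathbb{R}$, so for each $\xi < \omega_1$ I can choose a rational $q_\xi \in (W(z^{E_\xi}), W(z^{E_{\xi+1}}))$. For $\xi < \eta < \omega_1$ the chain of inequalities $q_\xi < W(z^{E_{\xi+1}}) \leq W(z^{E_\eta}) < q_\eta$ shows $q_\xi \neq q_\eta$; hence $\xi \mapsto q_\xi$ is an injection $\omega_1 \hookrightarrow \mathbb{Q}$, which is impossible since $|\omega_1| > \aleph_0$.

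The main obstacle is the invocation of the $\omega_1$-chain in $(\mathcal{P}(\mathbb{N}), \subsetneq)$ with infinite increments and co-infinite complements: although such chains exist in ZFC (related to the cardinal-characteristic inequality $\aleph_1 \leq \mathfrak{t}$), the transfinite recursion requires care at countable limit ordinals to preserve both the infiniteness of successive increments and the co-infiniteness of complements, typically handled via a pre-chosen almost-disjoint family or a Hausdorff-style diagonalization. An alternative avoiding the ordinal step would be to construct explicit pairs $(u^r, v^r)_{r \in \mathbb{R}}$ of streams with $u^r \prec v^r$ and $v^r \sim_{\mathrm{AN}} u^{s}$ (finite permutation) for suitable $r < s$, directly producing the disjoint-intervals contradiction; but the density-$\mathbb{R}$ nature of the parametrisation makes such a doubled chain delicate, which is why the ordinal route is natural here.
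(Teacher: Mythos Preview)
Your density lemma and the resulting block-monotonicity (if $A\subsetneq A'$ with $A'\setminus A$ infinite then $z^{A}\prec z^{A'}$ by WUAP) are correct and are exactly the engine the paper uses. The fatal gap is the $\omega_1$-chain: there is \emph{no} strictly $\subseteq$-increasing sequence $\langle E_\xi:\xi<\omega_1\rangle$ in $\mathcal P(\mathbb N)$ at all, let alone one with infinite increments. For each $n\in\mathbb N$ let $h(n)$ be the least $\xi$ with $n\in E_\xi$ (or $\omega_1$ if no such $\xi$ exists); then $E_\xi=\{n:h(n)\le\xi\}$. The range of $h$ below $\omega_1$ is a countable set of countable ordinals, hence bounded by some $\gamma<\omega_1$, and it follows that $E_\xi=E_\gamma$ for every $\xi\ge\gamma$, contradicting strictness. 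The inequality $\aleph_1\le\mathfrak t$ you invoke, and the Hausdorff-style diagonalization you allude to, concern $\subseteq^{*}$ (almost-inclusion): at a limit stage $\lambda$ the diagonal set $E_\lambda$ one constructs satisfies only $E_\xi\subseteq^{*}E_\lambda$, not $E_\xi\subseteq E_\lambda$. With mere almost-inclusion the streams $z^{E_\xi}$ and $z^{E_{\xi+1}}$ are no longer Pareto-comparable, so WUAP alone does not give $W(z^{E_\xi})<W(z^{E_{\xi+1}})$; one would have to bring in AN to swap away the finitely many offending blocks, which your argument does not do.

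The paper proceeds precisely along the ``alternative'' you flag in your last paragraph. It indexes by $r\in(0,1)$ via the Dedekind-cut chain $L(r)=\{n:q_n\in(0,r)\}$ (a genuine $\subseteq$-chain with infinite increments, but of course not well-ordered), and for each $r$ builds \emph{two} block-streams $x(r)\prec z(r)$, where $z(r)$ is obtained from $x(r)$ by turning on the blocks indexed by a sequence of rationals decreasing to $r$ from above. For $r<s$ it then shows $W(z(r))<W(x(s))$: when all the added rationals lie below $s$ this is pure WUAP, and when finitely many overshoot $s$ a finite permutation (AN) reduces to the previous case. This yields pairwise disjoint non-degenerate intervals $\bigl(W(x(r)),W(z(r))\bigr)$ for $r\in(0,1)$ and hence the contradiction. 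So the route you set aside as ``delicate'' is the one that works, whereas the $\omega_1$-chain route, as you state it, cannot.
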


\begin{proof}
We establish the claim by contradiction.
In the following proof we employ WUAP instead of UAP in order to stress that our result is robust to a weaker specification of the Pareto axiom.
Let $W: X\rightarrow \RR$ be a SWF satisfying WUAP and AN.
We let $a=0$ and $b=1$ without any loss of generality.
Let $q_1$, $q_2, \cdots$ be an enumeration of rational numbers in $[0, 1]$.
We keep this  enumeration fixed throughout the proof.
Let $r\in (0, 1)$.
Based on the above enumeration of rational numbers, we construct a sequence $x(r)$ as detailed below.
Let $l_1(r) = \min \left\{ n\in \N: q_n \in (0, r)\right\}$.
Having defined $l_1(r)$, for every $k\geq 1$ we set
\[
l_{k+1}(r)= \min \left\{ n\in \N \setminus \{l_1(r), l_2(r), \cdots, l_k(r)\}: q_n \in (0, r)\right\}.
\]
Note that $l_1(r)<l_2(r)<\cdots<l_{k}(r)<\cdots$.
Thus, we can define $L(r)$ as follows:
\[
L(r) = \{l_1(r), l_2(r), \cdots, l_k(r), \cdots\}.
\]
Now, let $U(r) = \{u_1(r), u_2(r), \cdots, u_k(r), \cdots\}$ denote the set $\N\setminus L(r)$, with
\[
u_1(r)<u_2(r)< \cdots< u_k(r)<u_{k+1}(r)<\cdots.
\]
\noindent We are ready to define the utility stream  $\langle x(r)\rangle$ as follows:%
\footnote{It is defined in blocks of $f(n)-f(n-1)$ elements at a time (for $n\geq 2$).
The initial $|I_{1}|$ element of $\langle x(r)\rangle$ equals $1$ if $1\in L(r)$, and $0$ otherwise.
The next $|I_{2}|$ element of $\langle x(r)\rangle$ equal $1$ if $2\in L(r)$, and $0$ otherwise, and so on.
Observe that $\langle x(r)\rangle$ is well-defined.}
\begin{equation}\label{03}
x_n(r) = \left\{ 
\begin{array}{ll}
1 & \text{if}\; n\in I_{l}\; \text{and}\; l\in L(r),\\
0 & \text{otherwise}.%
\end{array}
\right.
\end{equation}

\noindent Next, we select from the (fixed) enumeration of rational numbers a strictly decreasing sequence  $\langle q_{n_k(r)}\rangle\in (r, 1)$ which is convergent to $r$. 
Observe that the sequence $\left\{n_k(r): k\in\N\right\}$ is a sub-sequence of $\left\{u_n(r): n\in\N\right\}$.
Let $\Delta(r):=$  $\underset{k\in \N}{\cup} I_{n_k(r)}$.
We define another utility stream $\langle z(r)\rangle$ as follows:

\begin{equation}\label{04}
z_n(r) = \left\{ 
\begin{array}{ll}
1 & \text{if}\; n\in \Delta(r),\\
x_n(r) & \text{otherwise}.%
\end{array}
\right.
\end{equation}
Note that for every $n\in \Delta(r)$, $z_n(r)=1>0=x_n(r)$, therefore $z_n(r)\geq x_n(r)$ for every $n\in \N$.
Observe that for each term in the sequence $\left\{n_k(r): k\in \N\right\}$,%
\footnote{In the remainder of the proof we omit reference to $(r)$ for ease of notation, whenever no ambiguity arises from the context.}
using (\ref{eq-density}) above, we have
\begin{align*}
\alpha_{n_k} &= \frac{|I_{n_k}|}{\sum_{m=1}^{n_k}|I_{m}|}=1- \frac{1}{n_k}.
\end{align*}
Also, notice that $\Delta(r)\cap \{1, 2, \cdots, f(n_k)\}\supset I_{n_k}$ for each $k\in\N$.
Therefore, since $\alpha_{n_k}\rightarrow 1$ as $k\rightarrow \infty$, we get $\overline{d} (\Delta(r)) =1$.
Hence, by WUAP $x(r)\prec z(r)$, therefore
\begin{equation}\label{P1Ea}
 W(x(r))<W(z(r)).
\end{equation}
Next, we pick $s\in (r, 1)$.
To such an $s$ there correspond the sequences $\langle x(s)\rangle$ and $\langle z(s) \rangle$ according to (\ref{03}) and (\ref{04}), respectively.
In order to rank $z(r)$ and $x(s)$, we need to consider the following two possibilities.
\begin{enumerate}[(a)]
\item{$q_{n_1}<s$. 
In this case, for any $n \in \N$, if $z_n(r) =1$ then by construction of $x(s)$ we must have $x_n(s)=1$ as well.
Therefore, $x_n(s)\geq z_n(r)$ holds true for all $n\in\N$.
Let $\Delta (rs):=\underset{k^{\prime}\in \N}{\bigcup} I_{v_{k^{\prime}}}$, where $v_{k^{\prime}}\in (U(r)\cap L(s)) \setminus \{n_1, n_{2}, \cdots\}$.
Observe that there are infinitely many $q_{v_{k^{\prime}}}$ in the interval $\left[r,s\right)\setminus \left\{q_{n_k(r)}, k\in \N\right\}$.
Then, $z_n(r) = 0 < 1 = x_n(s)$ for every $n\in \Delta (rs)$.
By (\ref{eq-density}) above, let 
\[
\alpha_{v_{k^{\prime}}}= \frac{|I_{v_{k^{\prime}}}|}{\sum_{m=1}^{v_{k^{\prime}}}|I_{m}|} = 1- \frac{1}{v_{k^{\prime}}}.
\]
Observe that $v_{k^{\prime}} \rightarrow \infty$ as $k^{\prime} \rightarrow \infty$, therefore $\alpha_{v_{k^{\prime}}} \rightarrow 1$.
By WUAP, $z(r)\prec x(s)$, consequently \begin{equation}\label{P11Ec}
W(z(r)) < W(x(s)).
\end{equation}}
\item{$q_{n_1}\geq s$. 
First we observe that $q_{n_k}<s$ for all but finitely many $n_k$.
Hence, we can pick $K$, $K$ being finite, such that $q_{n_1}\geq s$, $\cdots$, $q_{n_K}\geq s$.
Then, for every $n$ belonging to $I_{n_1}$, $I_{n_2}$, $\cdots$, $I_{n_K}$ (there are finitely many such $n$), we have $z_n(r) =1>0=x_n(s)$.
There exist infinitely many $l_m(s) \in \N \setminus \{n_1, n_2, \cdots, n_K\}$, with $l_m(s)>n_K$, that are distinct from the subsequence $\left\{n_k\right\}$ and are such that $q_{l_m(s)}\in [r, q_{n_K})\cap [r, s)$.
For every $l_m(s)$ there are $|I_{l_m(s)}|$ elements of the utility stream $\langle x(s)\rangle$ such that $x_n(s)=1>0=z_n(r)$.
We interchange the $I_{n_1}$, $\cdots$, $I_{n_K}$ coordinates of $\langle z(r)\rangle$ (having value $1$) with an equal number of elements (having value $0$) from the $I_{l_m(s)}$, $I_{l_m^{\prime}(s)}$, $\cdots$ so as to obtain the utility stream $\langle z^{\prime}\rangle$.
It follows from AN that $z^{\prime}\sim z(r)$, hence
\begin{equation}\label{P1Eb}
W(z^{\prime}) = W(z(r)).
\end{equation}
Compare $\langle z^{\prime}\rangle$ to $\langle x(s)\rangle$, and observe that $z^{\prime}_n = 0 = x_n(s)$ for every $k$ and $n\in I_{u_k(s)}$.
Also, $z^{\prime}_n = 1 = x_n(s)$ for every  $k$ and $n\in I_{l_k(r)}$.
Moreover, $z^{\prime}_n = 1 = x_n(s)$ for every $k>K$ and $n\in I_{n_k}$.
Let $\Delta^{\prime} (rs):=\underset{k^{\prime}\in \N}{\cup} I_{v_{k^{\prime}}}$ where $v_{k^{\prime}}\in (U(r)\cap L(s)) \setminus \{n_K, n_{K+1}, \cdots\}$.
Thus, $z_n^{\prime} = 0 < 1 = x_n(s)$ for every $n\in \Delta^{\prime}(rs)$.
By (\ref{eq-density}) above, let 
\[
\alpha_{v_{k^{\prime}}}= \frac{|I_{v_{k^{\prime}}}|}{\sum_{m=1}^{v_k^{\prime}}|I_{m}|} = 1- \frac{1}{v_{k^{\prime}}}.
\]
Observe that $v_{k^{\prime}} \rightarrow \infty$ as $k^{\prime} \rightarrow \infty$, therefore $\alpha_{v_{k^{\prime}}} \rightarrow 1$.
By WUAP, $z^{\prime}\prec x(s)$, consequently \begin{equation}\label{P1Ec}
W(z^{\prime}) < W(x(s)).
\end{equation}}
\end{enumerate}
Therefore,  because the two cases are mutually exclusive and exhaustive, (\ref{P1Ea}) and (\ref{P11Ec}) and (\ref{P1Ea}), (\ref{P1Eb}), and (\ref{P1Ec}) imply that $(W(x(r)), W(z(r))$ and $(W(x(s)), W(z(s))$ are non-empty and disjoint open intervals.    
Hence, because $r$ and $s$, with $r<s$, were arbitrary, by density of $\mathbb{Q}$ in $\mathbb{R}$ we conclude that we have found a one-to-one mapping from $(0, 1)$ to $\mathbb{Q}$, which is impossible as the latter set is countable.
\end{proof}

\subsection{The non-constructive nature of a social welfare order satisfying upper asymptotic Pareto
and anonymity}

Proposition \ref{P1} leaves us wondering if one can describe the SWO under consideration despite the latter having no real-valued representation. 
Therefore, in Proposition \ref{P2} below we restrict ourselves to the same setting as in Proposition \ref{P1} and we show that no social welfare order satisfying anonymity and weak upper asymptotic Pareto is susceptible of an explicit description. To this end, it will
suffice to prove that the existence of such a social welfare order, when Y
contains only two elements, entails the existence of a non-Ramsey
set.
The proof of the following proposition is inspired by \citet{lauwers2010}.


\begin{proposition}
\label{P2} Let $Y=\{a,b\}$, with $a<b$, and assume that there is a social
welfare order on $X=Y^{\mathbb{N}}$ satisfying UAP and AN. Then, there
exists a non-Ramsey set.
\end{proposition}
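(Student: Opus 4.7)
My plan is inspired by the argument of \citet{lauwers2010}. Without loss of generality, set $a = 0$ and $b = 1$. The key device is to associate to each $A \in \Omega$ the utility stream $x^A \in X$ defined by $x^A_n = 1$ if $n \in \tilde A := \bigcup_{k \in A} I_k$ and $x^A_n = 0$ otherwise, where $\{I_k\}$ is the partition introduced in (\ref{part2}). The crucial property is that $\overline d(\tilde A) = 1$ for every $A \in \Omega$: indeed, for each $k \in A$, $|\tilde A \cap \{1, \ldots, f(k)\}|/f(k) \geq |I_k|/f(k) = \alpha_k = 1 - 1/k$ by (\ref{eq-density}), and this tends to $1$ as $k \to \infty$ along $A$. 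Consequently, by WUAP, whenever $A \subsetneq B$ in $\Omega$ with $B \setminus A$ infinite, one has $x^B \succ x^A$ (since $x^B \geq x^A$ pointwise and they differ on $\tilde{B \setminus A}$, a set of upper density $1$).

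Using the assumed SWO $\succsim$, I would introduce
\[
\Gamma := \{A \in \Omega : x^A \succsim x^{A^c}\},
\]
where $A^c = \mathbb{N} \setminus A$, working implicitly in the sub-collection of $A$'s with infinite complement (the cofinite exceptions form a Ramsey-null family and do not affect the argument). The plan is to show $\Gamma$ is non-Ramsey in the sense of Definition \ref{D5}, from which the proposition immediately follows.

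To show non-Ramseyness, I argue by contradiction. Suppose there is $T \in \Omega$ (which we may take to have infinite complement by a routine refinement) with $\Omega(T) \subseteq \Gamma$ or $\Omega(T) \cap \Gamma = \emptyset$. Split $T = T_0 \sqcup T_1$ into two infinite subsets (say, odd- and even-indexed elements of $T$). Since $\tilde{T^c}$ has upper density $1$, the general WUAP observation above yields unconditionally
\[
x^{T_0^c} = x^{T_1 \cup T^c} \succ x^{T_1}, \qquad x^{T_1^c} = x^{T_0 \cup T^c} \succ x^{T_0}.
\]
In Case 1 ($\Omega(T) \subseteq \Gamma$), the hypothesis applied to $T_0$ and $T_1$ gives $x^{T_0} \succsim x^{T_0^c}$ and $x^{T_1} \succsim x^{T_1^c}$; chaining,
\[
x^{T_0} \succsim x^{T_0^c} \succ x^{T_1} \succsim x^{T_1^c} \succ x^{T_0},
\]
which forces $x^{T_0} \succ x^{T_0}$ by transitivity and the asymmetry of $\succ$ -- a contradiction.

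The main technical obstacle is Case 2, where $x^{A^c} \succ x^A$ for every $A \in \Omega(T)$: now the hypothesized inequalities all point in the ``complement is better'' direction, so they do not immediately chain into a cycle. A natural remedy is to modify the definition of $\Gamma$ into a form that is self-dual under the involution $A \mapsto A^c$, so that Case 2 becomes symmetric to Case 1 on a suitably paired family; alternatively, one can apply the Ramsey dichotomy iteratively inside $\Omega(T)$ -- for instance, to a secondary coloring comparing $x^{A_o}$ to $x^{A_e}$, where $A_o$ and $A_e$ are the odd- and even-indexed sub-sequences of $A$ -- until enough structure appears to rerun a chaining argument. I expect this step to be the crux of the proof.
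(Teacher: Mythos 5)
Your Case 1 is fine, but Case 2 is a genuine gap, not a detail to be filled in later, and it is symptomatic of the coloring $\Gamma=\{A: x^{A}\succsim x^{A^{c}}\}$ being the wrong one. Every comparison you can extract from WUAP points in the same direction as set inclusion: $x^{B}\succ x^{A}$ whenever $B\supseteq A$ with $B\setminus A$ infinite. In Case 2 the hypothesized relations $x^{A^{c}}\succ x^{A}$ for $A\in\Omega(T)$ also all point from supersets of $T^{c}$ down to subsets of $T$, so the available strict preferences form an acyclic system and transitivity alone cannot produce a contradiction; indeed nothing rules out a SWO satisfying WUAP and AN that uniformly prefers $x^{A^{c}}$ to $x^{A}$ on all of $\Omega(T)$, in which case your $\Gamma$ would simply be Ramsey. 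Your two suggested remedies (a ``self-dual'' recoloring, or iterating the dichotomy on a secondary coloring) are exactly where the real work lies, and neither is carried out.

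The paper resolves this by coloring $A$ with a comparison \emph{internal} to $A$ rather than with $A$ versus $A^{c}$: from $A=\{n_{1}<n_{2}<\cdots\}$ it builds the blocks $I_{k}(A)=[f(n_{k-1}),f(n_{k}))$ and the two streams $x(A)$ (value $b$ on even-indexed blocks) and $y(A)$ (value $b$ on odd-indexed blocks, $k>1$), setting $\Gamma=\{A: x(A)\prec y(A)\}$ --- in effect comparing the even- and odd-indexed halves of $A$, which is the self-dual structure you were looking for. Crucially, the paper does not argue by contradiction from a homogeneity hypothesis; instead, for each $T$ it splits into the three cases $x(T)\prec y(T)$, $y(T)\prec x(T)$, $x(T)\sim y(T)$ and in each case deletes elements of $T$ in an explicit periodic pattern to obtain $S\in\Omega(T)$ whose blocks are unions of consecutive blocks of $T$. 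This merging makes WUAP yield comparisons in \emph{both} directions (e.g.\ $y(S)\prec x(T)$ and $y(T)\prec x(S)$), which chain with the case hypothesis to give $y(S)\prec x(S)$, i.e.\ the opposite color for $S$; one of the cases also needs AN to swap finitely many coordinates before WUAP applies. If you want to salvage your plan, you should replace $A\mapsto(x^{A},x^{A^{c}})$ by $A\mapsto(x^{A_{\mathrm{even}}},x^{A_{\mathrm{odd}}})$ (in your block notation) and then prove the flipping property directly for a constructed $S\subseteq T$ in all three cases, rather than hoping for a contradiction from the dichotomy.
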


\begin{proof}
We already know that UAP is stronger than WUAP. 
Thus, it will be enough to prove the claim assuming that the given SWO satisfies WUAP.
Given any $N:=\{n_1, n_2, \cdots n_k, \cdots\}$ (where $n_k<n_{k+1}$ for all $k\in\N$) that  belongs to $\Omega$, using (\ref{part1}) above we define recursively the following partition of natural numbers:
\[
I_1(N):= [0, f(n_1))\cap \N\; \text{and}\; I_{k}(N):= [f(n_{k-1}), f(n_{k}))\cap\N\; \text{for}\; k>1.
\]
Next, we define $x(N)$, $y(N) \in X$ as follows:
\begin{equation}\label{P2E1}
x_{t} (N)=
\begin{cases}
a \quad &\text{if $t \in I_k(N)$ and $k$ is odd}\\ 
b \quad &\text{if $t \in I_k(N)$ and $k$ is even},
\end{cases}
\end{equation}
\begin{equation}\label{P2E2}
y_{t} (N)=
\begin{cases}
a \quad &\text{if $t \in I_1(N)$,\; or\; $t\in I_k(N)$ and $k$ is even}\\ 
b \quad &\text{if $t \in I_k(N)$ and $k$ is odd, $k>1$.}
\end{cases}
\end{equation}
Let $\Gamma:= \{N \in \Omega: x(N) \prec y(N)\}$.
We claim that $\Gamma$ is a non-Ramsey set.
According to Definition \ref{D5}, we must show that for every $T \in \Omega$ there exists $S \in \Omega(T)$ such that $T \in \Gamma \Leftrightarrow S \notin \Gamma$. 
Pick any arbitrary $T:= \{t_1, t_2, \cdots, t_k, \cdots\}$, where $t_k<t_{k+1}$ for all $k\in\N$.
We distinguish three cases.
\begin{enumerate}[(1)]
\item $x(T) \prec y(T)$, therefore $T\in \Gamma$.
In this case, let $S:= T \setminus \{t_1, t_{4k+1}, t_{4k+2}: k\in \N \} = \{t_2, t_3, t_4, t_7, t_8, \cdots\}$.
Note that $I_1(S) = I_1(T)\cup I_2(T)$, $I_2(S) =I_3(T),\; I_{2k+1}(S) =I_{4k}(T)$, for all $k\geq 1$, and $I_{2k}(S) = I_{4k-3}(T)\cup I_{4k-2}(T)\cup I_{4k-1}(T)$ for all $k\geq 2$.
Therefore, by (\ref{P2E1}) and (\ref{P2E2}) we have $y_t(S) = a < b = x_t(T)$ for $ t\in I_2(T) \cup\; I_{4k+2}(T)$ and $y_t(S) =x_t(T)$ for all remaining $t\in\N$.
Define 
\[
\Delta:= \{t \in \N: y_t(S) < x_t(T)\}\; \text{and}\;
\delta_k:= \frac{|\Delta \cap [0, f(t_{4k+2}))|}{f(t_{4k+2})}.
\] 
Then, $I_2(T) \subset \Delta$, and $I_{4k+2}(T)\subset \Delta$ for all $k\in\N$.
For every $k \geq 1$ we have
\begin{equation}\label{C1E1}
\Delta \cap [0, f(t_{4k+2})) \supseteq I_{4k+2}(T)
\end{equation}
and $[f\left(t_{4k+2}-1\right), f\left(t_{4k+2}\right))\cap\N \subset I_{4k+2}(T)$.
Therefore, $|I_{4k+2}(T)|\geq f\left(t_{4k+2}\right) - f\left(t_{4k+2}-1\right) = \left(t_{4k+2}-1\right)f\left(t_{4k+2}-1\right)$, and
\begin{equation}\label{C1E2}
\sum_{j \leq 4k+2} |I_j(T)| = f\left(t_{4k+2}\right).
\end{equation}
Also,
\[
\frac{|I_{4k+2}(T)|}{\sum_{j \leq 4k+2} |I_j(T)|} \geq \frac{\left(t_{4k+2}-1\right)f\left(t_{4k+2}-1\right)}{f\left(t_{4k+2}\right)} = \frac{\left(t_{4k+2}-1\right)f\left(t_{4k+2}-1\right)}{\left(t_{4k+2}\right)f\left(t_{4k+2}-1\right)}=1- \frac{1}{t_{4k+2}}.
\]
Hence, by (\ref{C1E1}) and (\ref{C1E2}) and the above inequality, we have
\[
\delta_k:= \frac{|\Delta \cap [0,f(t_{4k+2}))|}{f(t_{4k+2})} \geq \frac{|I_{4k+2}(T)|}{\sum_{j \leq 4k+2} |I_j(T)|} \geq 1- \frac{1}{t_{4k+2}}.
\] 
Hence, $\overline{d}(\Delta) = 1$.
This is because given $\langle n_k: k \geq 1 \rangle$, with $n_k:=f(t_{4k+2})$, $\delta_k$ is a subsequence such that 
\[
\overline{d}(\Delta)=\limsup_{n \rightarrow \infty} \frac{|\Delta \cap \{1, \cdots, n\}|}{n} \geq \lim_{k \rightarrow \infty} \delta_k = 1.
\] 
Thus, we have found a set $\Delta \in \Omega$  such that $\overline{d} (\Delta) = 1$ and $y_t(S) = a < b = x_t(T)$ for $t \in \Delta$, and $y_t(S) =x_t(T)$ for all remaining $t\in\N$.
Therefore, it follows from WUAP that 
\begin{equation}\label{C1E3}
y(S) \prec x(T).
\end{equation} 
Since $y_t(T) = a < b = x_t(S)$ for $t\in I_{4k+2}(T)$, and $y_t(T) =x_t(S)$ for all remaining $t\in\N$, by the same logic one can prove that WUAP implies 
\begin{equation}\label{C1E4}
y(T) \prec x(S).
\end{equation}
Therefore, by (\ref{C1E3}) and (\ref{C1E4}) we get $y(S) \prec x(T) \prec y(T) \prec x(S)$.
By transitivity, $y(S) \prec x(S)$, which establishes that $S\notin \Gamma$, as was to be proven.

 
\item $y(T) \prec x(T)$, therefore $T\notin \Gamma$.
Let $S:= T \setminus \{t_1, t_{4k}, t_{4k+1}: k \in \N\} = \{t_2, t_3, t_6, t_7, t_{10}, \cdots \}$.
\noindent Note that $I_1(S) = I_1(T)\cup I_2(T)$, $ I_{2k+1}(S) = I_{4k}(T)\cup I_{4k+1}(T)\cup I_{4k+2}(T)$, and $I_{2k}(S) = I_{4k-1}(T)$, for all $k\in \N$.
Therefore, by (\ref{P2E1}) and (\ref{P2E2}) we have $x_t(S) = a < b = y_t(T)$ for $t\in I_{4k+1}(T)$, and $x_t(S) =  y_t(T)$ for all remaining $t\in \N$.
As in case (1) above, one can prove that  WUAP implies
\begin{equation}\label{C2E3}
x(S)\prec y(T).
\end{equation}

Furthermore, $x_t(T) = a < b = y_t(S)$ if $t\in I_{4k+1}(T)$, $y_t(S) = a < b = x_t(T)$ if $t\in I_{2}(T)$, and $x_t(S) =  y_t(T)$ for all remaining $t\in \N$.
Interchanging finitely many coordinates of $y(S)$ that lie in $I_2(T)$ with an equal number of coordinates in $I_5(T)$ yields the auxiliary sequence $y^{\prime}(S)$.
Hence, AN implies 
\begin{equation}\label{C2E4}
y^{\prime}(S)\sim y(S).
\end{equation}
Also, $x_t(T) = a < b = y^{\prime}_t(S)$ if $t\in I_{4k+1}(T)$, with $k\geq2$, and $x_t(S) =  y^{\prime}_t(S)$ for all remaining $t\in \N$.
As in case (1) above, using WUAP one can prove that
\begin{equation}\label{C2E5}
x(T)\prec y^{\prime}(S).
\end{equation}
Thus, it follows from (\ref{C2E4}), (\ref{C2E5}), and transitivity that
\begin{equation}\label{C2E6}
x(T)\prec y(S).
\end{equation} 
Therefore, by (\ref{C2E3}) and (\ref{C2E6}) we get  $x(S)\prec y(T)\prec x(T)\prec y(S)$.
By transitivity, $x(S)\prec y(S)$, which yields $S\in \Gamma$, as was to be proven.

 
\item $x(T) \sim y(T)$, therefore $T\notin \Gamma$.
Let $S:= T \setminus \{t_{4k-1}, t_{4k}: k \in \N\} = \{t_1, t_2, t_5, t_6, t_{9}, \cdots \}$.
Note that $I_1(S) = I_1(T)$, $I_{2k+1}(S) = I_{4k-1}(T)\cup I_{4k}(T)\cup I_{4k+1}(T)$, and
$I_{2k}(S) = I_{4k-2}(T)$ for all $k\in \N$.
Then, by (\ref{P2E1}) and (\ref{P2E2}) we have $x_t(S) = a < b = x_t(T)$ for $t\in I_{4k}(T)$, and  $x_t(S) =  x_t(T)$ for all remaining $t\in \N$.
As in case (1) above, WUAP implies
\begin{equation}\label{C3E1}
x(S) \prec x(T).
\end{equation}
Furthermore, $y_t(T) = a < b = y_t(S)$ for $t\in I_{4k}(T)$, and $y_t(T) =  y_t(S)$ for all remaining $t\in \N$.
As in case (1) above, one can prove that WUAP implies
\begin{equation}\label{C3E2}
y(T) \prec y(S).
\end{equation}
Therefore, by (\ref{C3E1}) and (\ref{C3E2}) $x(S)\prec x(T)\sim y(T)\prec y(S)$.
By transitivity, $x(S)\prec y(S)$.
Therefore, $S\in \Gamma$, as was to be proven.
\end{enumerate}
\end{proof}
It is worth pointing out that upper asymptotic Pareto is strictly stronger than lower asymptotic Pareto (see Remark \ref{R1} below). 
Consequently, the above propositions offer a novel result that can be contrasted with Petri's: while there exists a SWF satisfying anonymity and lower asymptotic Pareto if Y is finite (\citet{petri2019}), there is neither an explicit description (Proposition \ref{P2} above) nor a real-valued representation (Proposition \ref{P1} above) of a SWO that satisfies anonymity and upper asymptotic Pareto.

\begin{remark}\label{R1}
\emph{In what follows we substantiate our claim that upper asymptotic Pareto is indeed strictly stronger than lower asymptotic Pareto.
We accomplish this by showing first that the lower asymptotic density of the set $\Delta$ constructed in the proof of case (1) of Proposition \ref{P2}  is zero (while its upper asymptotic density is one, as we already know from that proof), and then by sketching the proof that the lower asymptotic density of $\Delta(r)$, $\Delta(rs)$ and $\Delta^{\prime}(rs)$ used in the proof of Proposition \ref{P1} is zero as well (recall that the upper asymptotic density of the foregoing sets is one).}

\emph{We know that $I_{2}(T)\subset \Delta $ and $I_{4k+2}(T)\subset \Delta $ for all $k\in \mathbb{N}$.
Observe that $|I_{2}(T)|=f(t_{2})-f(t_{1})<f(t_{2})$.}
\emph{Similarly, $|I_{4k-2}(T)|=f(t_{4k-2})-f(t_{4k-3})<f(t_{4k-2})$ for each $k\in \mathbb{N}$. 
Therefore, $\Delta \cap \lbrack 0,f(t_{4k-2})) = I_{2}(T)\cup I_{6}(T)\cup \cdots I_{4k-2}(T)$, and} 
\begin{equation}
|\Delta \cap \lbrack 0,f(t_{4k-2}))| = |I_{2}(T)\cup I_{6}(T)\cup \cdots I_{4k-2}(T)|<kf(t_{4k-2}).  \label{R1E1}
\end{equation}%
\emph{Also,} 
\begin{equation}
t_{4k+1}\geq t_{4k-2}+3.  \label{R1E2}
\end{equation}%
\emph{Moreover, notice that} $|\Delta \cap \lbrack 0,f(t_{4k-2}))|=|\Delta \cap
\lbrack 0,f(t_{4k+1}))|$. \emph{Therefore, it follows from (\ref{R1E1}) and (\ref{R1E2}%
) that} 
\begin{align*}
\frac{|\Delta \cap \lbrack 0,f(t_{4k+1}))|}{f(t_{4k+1})}&=
\frac{|\Delta \cap \lbrack 0,f(t_{4k-2}))|}{f(t_{4k+1})} \leq \frac{%
kf(t_{4k-2})}{f(t_{4k-2}+3)}\\
&=\frac{kf(t_{4k-2})}{(t_{4k-2}+3)(t_{4k-2}+2)(t_{4k-2}+1)f(t_{4k-2})} = \frac{k}{(t_{4k-2}+3)(t_{4k-2}+2)(t_{4k-2}+1)}\\
&\leq \frac{k}{%
(4k-2+3)(4k-2+2)(4k-2+1)} =\frac{1}{4(4k+1)(4k-1)}\rightarrow 0\;\emph{as}\;k\rightarrow \infty .
\end{align*}%
\emph{Hence, if we let $n_{k}:=t_{4k+1}$, for $k\in \mathbb{N}$, we have
proven that} 
\begin{equation}
\lim \frac{|\Delta \cap \lbrack 0,f(n_{k}))|}{f(n_{k})}=0.  \label{R1E3}
\end{equation}%
\emph{Since} $\frac{|\Delta \cap \lbrack 0,f(n_{k}))|}{f(n_{k})}$ \emph{is a
subsequence of }$\frac{|\Delta \cap \{1, \cdots, n\}|}{n}$, \emph{(\ref{R1E3})
above establishes that} $\liminf \;\frac{|\Delta \cap \{1, \cdots, n\}|}{n}=0$, 
\emph{as desired.} 

\emph{Next we sketch the proof that the lower asymptotic density of $\Delta(r)$ is zero.
To this end, observe that for the sequence $\{n_{k}(r):k\in\N\}$ used in the construction of $\langle z(r)\rangle$ the following holds from some $k\in \mathbb{N}$ onward: $n_{k+1}(r)>3+n_{k}(r)$ and $n_{k}(r)>k$.}
\emph{Therefore,} 
\begin{align*}
\frac{|\Delta(r)\cap [0, f(n_{k}(r)))|}{f(n_{k+1}(r)-1)}& \leq \frac{kf(n_{k}(r))}{f(n_{k}(r)+2)}=\frac{kf(n_{k}(r))}{(n_{k}(r)+2)(n_{k}(r)+1)f(n_{k}(r))}
\\
& =\frac{k}{(n_{k}(r)+2)(n_{k}(r)+1)}\leq \frac{k}{(k+2)(k+1)} \\
& \leq \frac{1}{k+2}\rightarrow 0\;\emph{as}\;k\rightarrow \infty .
\end{align*}%
\emph{A similar argument applies to $\Delta(rs)$ and $\Delta^{\prime}(rs)$.}
\end{remark}

\section{Concluding Remarks}
\label{sec:4}
We close this paper with a  remark on further research we plan to undertake in the future.
\citet{petri2019} proved that a social welfare order satisfying AN and LAP on an infinite domain Y admits no real-valued representation. 
This leaves open the question of whether such a social welfare order can be described explicitly. 
We are currently working on this open question in a companion paper.

\bibliographystyle{plainnat}
\bibliography{APAnonymity}

\end{document}